\newtheorem{theorem}{Theorem}[section]
\newtheorem{corollary}{Corollary}[section]
\newtheorem{proposition}{Proposition}[section]
\newtheorem{lemma}{Lemma}[section]
\newtheorem{definition}{Definition}[section]
\newenvironment{proof}[1][Proof]{\textbf{#1.} }{\ \rule{0.5em}{0.5em}}
\begin{document}

\title{The Eilenberg-Mac Lane cohomology \\
of an inverse monoid and \\
the maximum group image}
\author{
Anjeza Krakulli \\
Universiteti Aleksand\"er Moisiu \\
Fakulteti i Teknologjis\"e dhe Informacionit \\
Departamenti i Matematik\"es, Durr\"es \\
anjeza.krakulli@gmail.com \\
Elton Pasku \\
Universiteti i Tiran\"es \\
Fakulteti i Shkencave Natyrore \\
Departamenti i Matematik\"es, Tiran\"e \\
elton.pasku@fshn.edu.al \\
}

\date{}

\maketitle

\begin{abstract}

The aim of this paper is to see at what extent homological properties of an inverse monoid are determined from those of its maximum group image. We provide several evidences that the maximum group image contains vital homological information which can be used to study certain properties of the monoid itself. For instance, we prove that an inverse monoid $S$ is of type $FP_{\infty}$, if and only if it contains a minimal idempotent and its maximum group image is of the same type. Regarding cohomological dimensions, we show that the cohomological dimension of a free Clifford monoid and that of its maximum group image agree and are equal to one. Also we define the index of a full submonoid of an inverse monoid in terms of their maximum group images and show that if the index is finite then, the monoid is of type $\text{FP}_{\infty}$ if and only if its submonoid is of the same type.\newline
\newline
\textbf{Key words}: Inverse monoid, semilattice, maximum group image, cohomology groups,  $\text{Ext}$, $\text{Tor}$, homological finiteness condition $FP_{\infty}$, direct limits, direct products, unitarily finitely generated, cohomological dimension.
\end{abstract}

\section{Introduction and preliminaries}

A useful way to look for homological information for an inverse monoid is to study homological properties of its maximal subgroups and see at what extent they determine certain properties of the monoid. There are several evidences given in \cite{G + P} that this approach is indeed useful. In this paper the authors have shown that a Clifford monoid $S$ is of type $FP_{n}$ if and only if $S$ contains a minimal idempotent $e$ and the maximal subgroup of $S$ containing $e$ is of type $FP_{n}$. For the wider class of inverse semigroups, they prove under the assumption that the semigroup contains a minimal idempotent, that it is of type $FP_{n}$ if and only if its maximal subgroup containing that idempotent is of the same type. Differently from \cite{G + P}, in our paper we relate homological properties of an inverse monoid $S$ to those of its maximum group image $G$. We prove that $S$ is of type $FP_{\infty}$ if and only if $S$ contains a minimal idempotent and $G$ is of type $FP_{\infty}$. We also prove that the cohomological dimension of a free Clifford monoid and that of its maximum group image agree. In this case we prove that the cohomological dimension is one which makes a free Clifford monoid another candidate to prove an analogue of the Stalling Swan theorem for inverse semigroups. At the end of the paper we define the index of a full submonoid of an inverse monoid in terms of their maximum group images and show that if the index is finite then, the monoid is of type $\text{FP}_{\infty}$ if and only if its submonoid is of the same type. This property has its counterpart in the the theory of cohomology of groups. These results provide enough evidence that the maximum group image of an inverse monoid contains vital homological information which can be used to study homological properties of the monoid itself, and therefore deserves to be studied further on. \newline
By definition $S$ is an inverse semigroup if for each element $x$ there is a unique $x^{-1}$ such that $x=xx^{-1}x$ and $x^{-1}=x^{-1}xx^{-1}$. A key property of inverse semigroups is that their idempotents commute. If $S$ is an inverse monoid and $E$ its semilattice of idempotents, then we let $G$ be the maximum group image of $S$; that is $G \cong S/\sigma$ where $\sigma$ is the congruence on $S$ defined as follows. For every $a,b \in S$, $a \sigma b$ if and only if there is an $e \in E$ such that $ae=be$, or equivalently, if there is $f \in E$ such that $fa=fb$. The unit of $G$ will be denoted by 1.
\newline
We can regard the monoids $S$ and $G$ as small categories with a single object, denoted by $\ast_{S}$ and $\ast_{G}$ respectively, and with morphisms, the elements of the respective monoids. Define $J: S \rightarrow G$ by $J(\ast_{S})=\ast_{G}$ and for every $s \in S$ we let $J(s)=\mu(s)$ where $\mu: S \rightarrow S/\sigma$ is the canonical epimorphism. One can easily prove that $J$ is a functor using the fact that it arises for an epimorphism of monoids. The functor $J$ induces a functor $J^{\ast}:\mathbf{Ab}^{G} \rightarrow \mathbf{Ab}^{S}$ by the rule $J^{\ast}(M)=MJ$, for every $G$-module $M \in \mathbf{Ab}^{G}$. \newline
The following construction is a special case of the comma category \cite{MacLane}. Denote by $\Im \downarrow \ast_{G}$ the category of $J$-objects over $\ast_{G}$ as follows. An object of $\Im \downarrow \ast_{G}$ is a pair $(\ast_{S},a)$ where $a: \ast_{G} \rightarrow \ast_{G}$ is a morphism in $G$. A morphism $s: (\ast_{S},a) \rightarrow (\ast_{S},b)$ is a morphism $s : \ast_{S} \rightarrow \ast_{S}$ such that the diagram
\begin{equation*}
\xymatrix{ J(\ast_{S})=\ast_{G} \ar[rr]^{J(s)} \ar[rd]_{a} && J(\ast_{S})=\ast_{G} \ar[ld]^{b} \\ & \ast_{G}}
\end{equation*}
commutes. In other words, there is a morphism $s: (\ast_{S},a) \rightarrow (\ast_{S},b)$ if $bJ(s)=a$. \newline
We record the following lemma for future use.
\begin{lemma} \label{e=ze}
For every monoid $S$, there is an isomorphism between additive categories $\mathbf{Ab}^{S}$ and $\mathbf{Ab}^{\mathbb{Z}S}$, where $\mathbb{Z}S$ is the additivization of $S$.
\end{lemma}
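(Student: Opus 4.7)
The plan is to invoke the universal property of the additivization: for any preadditive category $\mathcal{C}$, additive functors $\mathbb{Z}S \to \mathcal{C}$ correspond naturally to ordinary functors $S \to \mathcal{C}$. Taking $\mathcal{C}=\mathbf{Ab}$ should give the desired isomorphism, which I would make concrete by constructing mutually inverse additive functors $F : \mathbf{Ab}^{S} \to \mathbf{Ab}^{\mathbb{Z}S}$ and $G : \mathbf{Ab}^{\mathbb{Z}S} \to \mathbf{Ab}^{S}$.

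To build $F$, I start from $M \in \mathbf{Ab}^{S}$, which amounts to an abelian group $A=M(\ast_{S})$ together with a monoid homomorphism $S \to \mathrm{End}(A)$. Extending this $\mathbb{Z}$-linearly yields a ring homomorphism $\mathbb{Z}S \to \mathrm{End}(A)$, equivalently an additive functor $F(M) : \mathbb{Z}S \to \mathbf{Ab}$ with $F(M)(\ast_{S})=A$. For the reverse direction, $G$ is defined by restriction along the canonical inclusion $\iota : S \hookrightarrow \mathbb{Z}S$ sending $s \mapsto 1\cdot s$. On natural transformations, both functors act as the identity on the underlying homomorphism $A \to B$: compatibility with the $S$-action and compatibility with the $\mathbb{Z}S$-action are equivalent by $\mathbb{Z}$-linearity, so the morphism sets $\mathrm{Hom}_{\mathbf{Ab}^{S}}(M,N)$ and $\mathrm{Hom}_{\mathbf{Ab}^{\mathbb{Z}S}}(F(M),F(N))$ are identified as abelian groups.

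Checking that $G \circ F$ and $F \circ G$ are identities is then routine: the first simply restores the original monoid action, while the second uses that $S$ generates $\mathbb{Z}S$ as an abelian group, so any additive functor $N : \mathbb{Z}S \to \mathbf{Ab}$ is uniquely determined by its restriction $G(N) = N\circ\iota$ to $S$. I do not anticipate any substantial obstacle; the lemma is the categorical packaging of the classical equivalence between monoid representations and modules over the monoid ring, and the only bookkeeping is verifying that $F$ respects both composition and the additive structure of $\mathbb{Z}S$, both of which follow from the $\mathbb{Z}$-bilinearity of multiplication in $\mathbb{Z}S$ combined with the monoid relations satisfied by $M$.
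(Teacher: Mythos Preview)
Your proposal is correct and follows essentially the same approach as the paper: the paper's $\mathcal{I}_{S}$ is precisely your $F$, defined by $\mathbb{Z}$-linearly extending a functor $S\to\mathbf{Ab}$ to an additive functor $\mathbb{Z}S\to\mathbf{Ab}$ (and similarly on natural transformations). You spell out the inverse and the verifications that the paper leaves implicit, but the underlying idea is identical.
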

\begin{proof}
Define $\mathcal{I}_{S}: \mathbf{Ab}^{S} \rightarrow \mathbf{Ab}^{\mathbb{Z}S}$ by $\mathcal{I}_{S}(F)=\mathbb{Z}F$ on objects and by $\mathcal{I}_{S}(\tau)=\mathbb{Z}\tau$ on morphisms $\tau:F_{1} \overset{\cdot} {\rightarrow} F_{2}$. 
\end{proof}

\section{Cohomology of inverse monoids}

As long as we are trying to relate the homological properties of an inverse monoid to those of its maximum group image, it is natural to consider the Eilenberg-MacLane cohomology of monoids which by definition is given by
\begin{equation*}
H^{n}(S,M)=Ext_{\mathbb{Z}S}^{n}(\mathbb{Z},M).
\end{equation*}
Here $S$ is a monoid, $M$ is a left $S$-module and $\mathbb{Z}$ is the trivial $S$-module. \newline
The following lemma is crucial in the proof of theorem \ref{coh}.
\begin{lemma} \label{strongly filtered}
$\Im \downarrow \ast_{G}$ is filtered and if $S$ contains a minimal idempotent, then $\Im \downarrow \ast_{G}$ is strongly filtered in the sense of \cite{Sch}.
\end{lemma}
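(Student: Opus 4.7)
The plan is to verify the three axioms of a filtered category for $\Im \downarrow \ast_{G}$ directly from the comma-category description, and then, under the minimal idempotent hypothesis, to upgrade the coequalizing axiom to the uniform form that \cite{Sch} calls strongly filtered. Throughout, the two tools I expect to do all the work are the surjectivity of $\mu: S \to G$ (which is part of saying $G$ is the maximum group image) and the definition of the congruence $\sigma$ in terms of idempotents.

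Nonemptiness is immediate since $(\ast_{S},1)$ is an object. For the upper-bound axiom, given two objects $(\ast_{S},a)$ and $(\ast_{S},b)$, surjectivity of $\mu$ lets me choose $s\in\mu^{-1}(a)$ and $t\in\mu^{-1}(b)$; these are morphisms $s:(\ast_{S},a)\to(\ast_{S},1)$ and $t:(\ast_{S},b)\to(\ast_{S},1)$ because $1\cdot\mu(s)=a$ and $1\cdot\mu(t)=b$. For the coequalizing axiom, given parallel $s,t:(\ast_{S},a)\to(\ast_{S},b)$, the defining equations $b\mu(s)=a=b\mu(t)$ give $\mu(s)=\mu(t)$, equivalently $s\,\sigma\,t$. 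By the definition of $\sigma$ there exists $e\in E$ with $es=et$; since $\mu(e)=1$, this $e$ determines a morphism $(\ast_{S},b)\to(\ast_{S},b)$ in the comma category whose composite with $s$ (resp. $t$) is the product $es$ (resp. $et$) in $S$, and these agree. This establishes that $\Im\downarrow\ast_{G}$ is filtered.

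Now suppose $S$ contains a minimal idempotent $e_{0}$. I would sharpen the coequalizer as follows: for any witness $es=et$ to $s\,\sigma\,t$, the minimality of $e_{0}$ gives $e_{0}\leq e$ in the semilattice $E$, hence $e_{0}e=e_{0}$, and therefore
\[
e_{0}s \;=\; e_{0}e\,s \;=\; e_{0}e\,t \;=\; e_{0}t.
\]
Consequently the \emph{single} morphism $e_{0}:(\ast_{S},b)\to(\ast_{S},b)$ coequalizes every parallel pair of morphisms with target $(\ast_{S},b)$ simultaneously, and moreover does so independently of the target object: by the same minimality argument, its analogue at any $(\ast_{S},c)$ again works uniformly. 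This uniformity is precisely the additional content one expects ``strongly filtered'' to encode.

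The main obstacle I anticipate is not any of the verifications above, which reduce to unpacking the definition of $\sigma$ and the composition rule in the comma category, but rather aligning my uniform-coequalizer description with the exact formulation of ``strongly filtered'' used in \cite{Sch}. Once that identification is made, the minimal idempotent $e_{0}$ provides a canonical universal coequalizer and the lemma follows; absent a minimal idempotent one only obtains pointwise coequalizers, which is why the stronger conclusion genuinely needs the extra hypothesis.
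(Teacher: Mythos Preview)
Your verification that $\Im\downarrow\ast_{G}$ is filtered is correct and coincides with the paper's argument.

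For the strongly filtered part there is a genuine gap, and it is exactly the one you flag at the end. The notion from \cite{Sch} that the paper invokes (and later uses via Schubert's Proposition~9.5.3) is not a uniform coequalizer of parallel pairs: it is a \emph{pencil} condition. A pencil is a family $s_{i}:(\ast_{S},a)\to(\ast_{S},a_{i})$, $i\in I$, with common source but \emph{arbitrary, in general distinct} targets, and one must produce morphisms $u_{i}:(\ast_{S},a_{i})\to(\ast_{S},c)$ to a single object such that all composites $u_{i}s_{i}$ agree. Your argument only shows that the endomorphism $e_{0}$ coequalizes any set of arrows that are already parallel; since the $s_{i}$ in a pencil need not share a target, $e_{0}$ alone does not furnish the required completion.

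The missing step is the one the paper supplies: choose $t_{i}\in\mu^{-1}(a_{i})$, so each $t_{i}:(\ast_{S},a_{i})\to(\ast_{S},1)$, and observe that now the $t_{i}s_{i}$ \emph{are} parallel (common source $(\ast_{S},a)$, common target $(\ast_{S},1)$). At this point your minimal-idempotent trick applies verbatim: $\mu(t_{i}s_{i})=\mu(t_{j}s_{j})$ gives $e_{ij}t_{i}s_{i}=e_{ij}t_{j}s_{j}$ for some idempotent $e_{ij}$, and multiplying by $e_{0}$ yields $e_{0}t_{i}s_{i}=e_{0}t_{j}s_{j}$ for all $i,j$. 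Thus $u_{i}=e_{0}t_{i}$ is the commutative completion. Once you insert this reduction-to-parallel step, your proof becomes the paper's proof; without it, the pencil axiom is not verified.
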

\begin{proof}
Let $(\ast_{S},a)$ and $(\ast_{S},b)$ be two objects of $\Im \downarrow {\ast_{G}}$. We can chose $\alpha$ and $\beta \in S$ such that $\mu(\alpha)=a$ and $\mu(\beta)=b$, then, from the definition $\alpha: (\ast_{S},a) \rightarrow (\ast_{S},1)$ and $\beta: (\ast_{S},b) \rightarrow (\ast_{S},1)$ are arrows in $\Im \downarrow \ast_{G}$. Secondly, if $s_{1}, s_{2}: (\ast_{S},a) \rightarrow (\ast_{S},b)$ are parallel arrows, then we have $b\mu(s_{1})=a=b\mu(s_{2})$, which means that $s_{1} \sigma s_{2}$ and as a result there is an $e \in E$ such that $es_{1}=es_{2}$. But evidently, $e: (\ast_{S},b) \rightarrow (\ast_{S},b)$ is an arrow in $\Im \downarrow \ast_{G}$, hence the above equality is an equality of arrows in $\Im \downarrow \ast_{G}$. This shows that $\Im \downarrow \ast_{G}$ is a filtered category. Lastly, we assume that $S$ contains a minimal idempotent $\varepsilon$ and let $s_{i}:(\ast_{S},a) \rightarrow (\ast_{S},a_{i})$ with $i \in I$ be a pencil in $\Im \downarrow {\ast_{G}}$. For every $i \in I$, chose $t_{i} \in S$ such that $\mu(t_{i})=a_{i}$. For every $i$ and $j \in I$ we have that $a_{i}\mu(s_{i})=a=a_{j}\mu(s_{j})$, hence $\mu(t_{i}s_{i})=\mu(t_{j}s_{j})$. Then there exists and idempotent $e_{ij} \in E$ such that $e_{ij}t_{i}s_{i}=e_{ij}t_{j}s_{j}$. Multiplying through on the left by $\varepsilon$ and recalling that $\varepsilon$ is a minimal idempotent, we obtain $\varepsilon t_{i}s_{i}=\varepsilon t_{j}s_{j}$. As before, $\varepsilon: (\ast_{S},1) \rightarrow (\ast_{S},1)$ is an arrow in $\Im \downarrow {\ast_{G}}$, therefore the family of arrows $\varepsilon t_{i}$ with $i \in I$ is a commutative completion of the given pencil showing that $\Im \downarrow {\ast_{G}}$ is strongly filtered.
\end{proof}
\newline
\newline
The following is an analogue of theorem 4.1 of \cite{Pasku} for inverse monoids in general and also an analogue of proposition 3.6 of \cite{Loganathan} which relates the Lausch cohomology of an inverse monoid to the Eilenberg-Mac Lane cohomology of its maximum group image.
\begin{theorem} \label{coh}
For every $n \geq 0$ and every left $\mathbb{Z}G$ module $M$ there is a natural isomorphism  
\begin{equation} \label{Z}
Ext_{\mathbb{Z}G}^{n}(\mathbb{Z},M) \cong Ext_{\mathbb{Z}S}^{n}(\mathbb{Z}, \mathbf{J}^{\ast}M)
\end{equation}
where $\mathbf{J}^{\ast} = \mathcal{I}_{S} J^{\ast} \mathcal{I}_{G}^{-1}$, with $\mathcal{I}_{G}$ and $\mathcal{I}_{S}$ being the isomorphisms of lemma \ref{e=ze} and $\mathbb{Z}$ is the trivial module.
\end{theorem}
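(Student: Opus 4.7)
The plan is to compute both sides of (\ref{Z}) from a single projective resolution of the trivial module $\mathbb{Z}$ over $\mathbb{Z}S$, using the adjunction between $\mathbf{J}^{\ast}$ and its left Kan extension $\mathbf{J}_{!}$. Under the identifications provided by $\mathcal{I}_{S}$ and $\mathcal{I}_{G}$, the functor $\mathbf{J}^{\ast}$ is nothing but restriction of scalars along the ring homomorphism $\mathbb{Z}S\to\mathbb{Z}G$ induced by $J$; in particular $\mathbf{J}^{\ast}$ is exact, and its left adjoint is $\mathbf{J}_{!}=\mathbb{Z}G\otimes_{\mathbb{Z}S}-$.

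The first step is to establish that $\mathbf{J}_{!}$ is exact. Since $G$ has a single object $\ast_{G}$, evaluating the left Kan extension at $\ast_{G}$ amounts to a colimit in $\mathbf{Ab}$ indexed by $\Im\downarrow\ast_{G}$. By Lemma \ref{strongly filtered} this comma category is filtered, so the colimit is exact and hence so is $\mathbf{J}_{!}$. Combined with exactness of $\mathbf{J}^{\ast}$, the usual adjunction argument then shows that $\mathbf{J}_{!}$ sends projectives to projectives: for any projective $P$ the functor $Hom_{\mathbb{Z}G}(\mathbf{J}_{!}P,-)\cong Hom_{\mathbb{Z}S}(P,\mathbf{J}^{\ast}-)$ is a composition of exact functors.

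Next I would verify the key input $\mathbf{J}_{!}(\mathbb{Z})\cong\mathbb{Z}$: in $\mathbb{Z}G\otimes_{\mathbb{Z}S}\mathbb{Z}$ the bimodule relations give $g\mu(s)\otimes 1=g\otimes (s\cdot 1)=g\otimes 1$ for every $g\in G$, $s\in S$, and surjectivity of $\mu$ forces $gh\otimes 1=g\otimes 1$ for all $g,h\in G$, identifying the whole module with $\mathbb{Z}$ carrying the trivial action. Taking any projective resolution $P_{\bullet}\to\mathbb{Z}$ over $\mathbb{Z}S$ and applying $\mathbf{J}_{!}$ then produces, by exactness and preservation of projectives, a projective resolution $\mathbf{J}_{!}P_{\bullet}\to\mathbb{Z}$ over $\mathbb{Z}G$. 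The tensor-hom adjunction yields a natural isomorphism of cochain complexes
\begin{equation*}
Hom_{\mathbb{Z}G}(\mathbf{J}_{!}P_{\bullet},M)\cong Hom_{\mathbb{Z}S}(P_{\bullet},\mathbf{J}^{\ast}M),
\end{equation*}
whose cohomology on the left computes $Ext^{n}_{\mathbb{Z}G}(\mathbb{Z},M)$ and on the right $Ext^{n}_{\mathbb{Z}S}(\mathbb{Z},\mathbf{J}^{\ast}M)$, with naturality in $M$ inherited from the adjunction.

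The genuinely monoid-theoretic content of the argument is concentrated in the exactness of $\mathbf{J}_{!}$, which rests entirely on the filteredness of $\Im\downarrow\ast_{G}$ supplied by Lemma \ref{strongly filtered}; this is the step I expect to be the main obstacle in the sense that everything else is formal. Without that ingredient one would be reduced to a Grothendieck spectral sequence rather than a degree-by-degree isomorphism. Conversely, since only plain filteredness is invoked, no minimal-idempotent hypothesis is required, which is consistent with the statement of the theorem.
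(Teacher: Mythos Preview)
Your proposal is correct and follows essentially the same route as the paper: establish the adjunction $\mathbf{J}_{!}\dashv\mathbf{J}^{\ast}$, use filteredness of $\Im\downarrow\ast_{G}$ from Lemma~\ref{strongly filtered} to prove $\mathbf{J}_{!}$ exact, compute $\mathbf{J}_{!}\mathbb{Z}\cong\mathbb{Z}$, and conclude via the adjunction on a projective resolution. The only cosmetic difference is that the paper packages the last step by invoking Theorem~12.1 of \cite{HilStamm}, whereas you unpack that theorem explicitly; your closing remark that only ordinary filteredness is needed here (not the strong version) is also exactly in line with the paper.
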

\begin{proof}
We show first that $J^{\ast}$ has a left adjoint $\widetilde{J}$ by using standard categorical arguments and then we show that both, $J^{\ast}$ and $\widetilde{J}$ are exact functors. Since all colimits exist in $\mathbf{Ab}$, then the dual of theorem 1, p. 237 of \cite{MacLane} shows that every functor $T \in \mathbf{Ab}^{S}$ has a left Kan extension $Lan_{J}T$ along $J$ defined by
\begin{equation} \label{colim}
Lan_{J}T(\ast_{G})=\underrightarrow{Lim}(\Im \downarrow \ast_{G}\overset{P}{\longrightarrow} S \overset{T}{\longrightarrow} \mathbf{Ab}),
\end{equation}
where $P$ is the projection $(\ast_{S}, a) \mapsto \ast_{S}$. Now the dual of the argument given in p. 237 of \cite{MacLane} shows that the function $T \mapsto Lan_{J}T$ determines a left adjoint $\widetilde{J}$ of $J^{\ast}$. As a left adjoint, $\widetilde{J}$ preserves cokernels, so to prove it is exact it remains to show that $\widetilde{J}$ preserves monics too. Let $T_{1} \hookrightarrow T_{2}$ be a monic in $\mathbf{Ab}^{S}$, then Proposition 3.1, p. 258 of \cite{MacLane-Hom} shows that the induced morphism $T_{1}P \rightarrow T_{2}P$ is also monic in $\mathbf{Ab}^{\Im \downarrow \ast_{G}}$. Regarding $\mathbf{Ab}$ as the category of right $\mathbb{Z}$ modules and recalling from lemma \ref{strongly filtered} that $\Im \downarrow \ast_{G}$ is filtered, we can apply theorem 2.6.15 of \cite{Weib} to show that the other induced morphism $\underrightarrow{Lim}(T_{1}P) \rightarrow \underrightarrow{Lim}(T_{2}P)$ is monic which from (\ref{colim}) is the same as to say that $Lan_{J}T_{1}(\ast_{G}) \rightarrow Lan_{J}T_{2}(\ast_{G})$ is monic too. Proposition 3.1, p. 258 of \cite{MacLane-Hom} again shows that $Lan_{J}T_{1} \rightarrow Lan_{J}T_{2}$ is monic proving the exactness of $\widetilde{J}$. 

We show that also $J^{\ast}$ is exact. For this recall first that $J^{\ast}$ preserves kernels as a right adjoint. It remains to show that it preserves epics too. Indeed, if $M_{1} \twoheadrightarrow M_{2}$ is an epic in $\mathbf{Ab}^{G}$, then the induced homomorphism $M_{1}J(\ast_{S})=M_{1} \rightarrow M_{2}=M_{2}J(\ast_{S})$ is a surjective homomorphism of left $S$ modules. 

The composite $\mathbf{\widetilde{\mathbf{J}}}=\mathcal{I}_{G} \widetilde{J} \mathcal{I}_{S}^{-1} :\mathbf{Ab}^{\mathbb{Z}S} \rightarrow \mathbf{Ab}^{\mathbb{Z}G}$ is exact since $\mathcal{I}_{G}$ and $\mathcal{I}_{S}^{-1}$ are exact. In the same way we can get another exact functor $\mathbf{J}^{\ast} = \mathcal{I}_{S} J^{\ast} \mathcal{I}_{G}^{-1}:\mathbf{Ab}^{\mathbb{Z}G} \rightarrow \mathbf{Ab}^{\mathbb{Z}S}$. In fact this functor is the change of ring functor in the sense of \cite{HilStamm}. Since each isomorphism is adjoint to its inverse, we see from theorem 1, p. 103 of \cite{MacLane} that $\mathbf{\widetilde{\mathbf{J}}}$ is a left adjoint to $\mathbf{J}^{\ast}$. Now we can apply theorem 12.1, p. 162 of \cite{HilStamm} to obtain for every $n \geq 0$ a natural isomorphism
\begin{equation} \label{ext}
\Phi^{n}: Ext_{\mathbb{Z}G}^{n}(\widetilde{\mathbf{J}}N,M) \rightarrow Ext_{\mathbb{Z}S}^{n}(N, \mathbf{J}^{\ast}M)
\end{equation}
for every $M \in \mathbf{Ab}^{\mathbb{Z}G}$ and $N \in \mathbf{Ab}^{\mathbb{Z}S}$. If we take $N$ to be the trivial left $\mathbb{Z}S$ module $\mathbb{Z}$, then $\widetilde{\mathbf{J}}N$ coincides with the trivial $\mathbb{Z}G$ module $\mathbb{Z}$. Indeed, as proved in \cite{HilStamm}, the change of ring functor $\mathbf{J}^{\ast}$ has a left adjoint (which has to be naturally isomorphic to $\mathbf{\widetilde{\mathbf{J}}}$) given by the rule $N \mapsto \mathbb{Z}G \otimes_{\mathbb{Z}S} N$. For $N=\mathbb{Z}$ we see that any generator $g \otimes_{\mathbb{Z}S}z$ of $\mathbb{Z}G \otimes_{\mathbb{Z}S} \mathbb{Z}$ can be reduced as follows: $g \otimes_{\mathbb{Z}S}z=1 \otimes_{\mathbb{Z}S} s \cdot z=1 \otimes_{\mathbb{Z}S} z$ where $s \in \mu^{-1}(g)$. Therefore, $\widetilde{\mathbf{J}}\mathbb{Z} \cong \mathbb{Z}$. Applying (\ref{ext}) for $N=\mathbb{Z}$, we get the natural isomorphism $Ext_{\mathbb{Z}G}^{n}(\mathbb{Z},M) \cong Ext_{\mathbb{Z}S}^{n}(\mathbb{Z}, \mathbf{J}^{\ast}M)$.
\end{proof}
\bigskip
\newline
The isomorphism of theorem \ref{coh} shows that $\text{cd}G \leq \text{cd}S$. We prove in the following proposition that in the case of inverse monoids containing a minimal idempotent cohomological dimensions are the same and then obtain as a corollary that free clifford monoids have cohomological dimension one.
\begin{proposition} \label{min idempotent}
Let $S$ be an inverse monoid containing a minimal idempotent $\varepsilon$. Then the cohomological dimension of $S$ and that of its maximum group image $G$ agree.
\end{proposition}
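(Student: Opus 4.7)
The paragraph preceding the proposition has already recorded the easy direction $\text{cd}\,G\le\text{cd}\,S$ using the isomorphism (\ref{Z}), so the real task is to establish $\text{cd}\,S\le\text{cd}\,G$. My plan is to show that under the hypothesis on $\varepsilon$, the change of rings functor $\mathbf{J}^{\ast}$ preserves projectives; combined with its exactness (established inside the proof of Theorem \ref{coh}) and the obvious identity $\mathbf{J}^{\ast}\mathbb{Z}=\mathbb{Z}$, this will convert any projective resolution of the trivial module $\mathbb{Z}$ over $\mathbb{Z}G$ of length $n=\text{cd}\,G$ into one of the same length over $\mathbb{Z}S$, yielding $\text{cd}\,S\le n$.

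The heart of the matter is an isomorphism $\mathbf{J}^{\ast}\mathbb{Z}G\cong\mathbb{Z}S\varepsilon$ of left $\mathbb{Z}S$-modules. Because $\varepsilon$ is the minimum of $E$, one has $f\varepsilon=\varepsilon f=\varepsilon$ for every $f\in E$, and this yields the characterisation $s\sigma t\Leftrightarrow \varepsilon s=\varepsilon t\Leftrightarrow s\varepsilon=t\varepsilon$. I would use the last of these to define $\phi:\mathbb{Z}G\to\mathbb{Z}S\varepsilon$ by $\mu(s)\mapsto s\varepsilon$. Well-definedness and injectivity follow at once from the characterisation, surjectivity is clear since every generator of $\mathbb{Z}S\varepsilon$ has the form $s\varepsilon$, and $\mathbb{Z}S$-linearity reduces to the direct computation $\phi(s\cdot\mu(t))=\phi(\mu(st))=(st)\varepsilon=s\cdot\phi(\mu(t))$, where the action of $s$ on the left-hand side is through $\mu$ in the sense of $\mathbf{J}^{\ast}$.

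Once the isomorphism is in hand, $\mathbb{Z}S\varepsilon$ is projective as a left $\mathbb{Z}S$-module because the orthogonal decomposition $\mathbb{Z}S=\mathbb{Z}S\varepsilon\oplus\mathbb{Z}S(1-\varepsilon)$ exhibits it as a direct summand of $\mathbb{Z}S$. Hence $\mathbf{J}^{\ast}\mathbb{Z}G$ is projective, and by additivity $\mathbf{J}^{\ast}$ sends every free $\mathbb{Z}G$-module to a projective $\mathbb{Z}S$-module, and consequently every projective $\mathbb{Z}G$-module to a projective $\mathbb{Z}S$-module. Applying $\mathbf{J}^{\ast}$ to a projective resolution $P_{\bullet}\to\mathbb{Z}$ over $\mathbb{Z}G$ of length $n=\text{cd}\,G$, and invoking the exactness of $\mathbf{J}^{\ast}$ together with $\mathbf{J}^{\ast}\mathbb{Z}=\mathbb{Z}$, produces a projective resolution of the trivial $\mathbb{Z}S$-module $\mathbb{Z}$ of length $n$, so that $\text{cd}\,S\le n=\text{cd}\,G$. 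The main obstacle is the construction of $\phi$, and it is precisely there that the minimum property of $\varepsilon$—the absorbing identity $f\varepsilon=\varepsilon$ for every $f\in E$—is essential, as it is what aligns $\sigma$-equivalence on $S$ with the equality of elements $s\varepsilon$ inside $S$.
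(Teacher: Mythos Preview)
Your argument is correct. Both your proof and the paper's hinge on the same central fact---that $\mathbf{J}^{\ast}\mathbb{Z}G$ is a projective left $\mathbb{Z}S$-module---but you reach it and exploit it differently. The paper argues that $\mathbf{J}^{\ast}\mathbb{Z}G$ is flat (via Theorem~\ref{coh}) and finitely presented (via the exact sequence $\mathbb{Z}S\xrightarrow{\psi}\mathbb{Z}S\xrightarrow{\tilde\mu}\mathbf{J}^{\ast}\mathbb{Z}G\to 0$ with $\psi(s)=s-s\varepsilon$), hence projective, and then invokes the coinduction isomorphism $\text{Ext}^{n}_{\mathbb{Z}G}(\mathbb{Z},\text{Hom}_{\mathbb{Z}S}(\mathbf{J}^{\ast}\mathbb{Z}G,B))\cong\text{Ext}^{n}_{\mathbb{Z}S}(\mathbb{Z},B)$ from Hilton--Stammbach to conclude. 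You instead identify $\mathbf{J}^{\ast}\mathbb{Z}G\cong\mathbb{Z}S\varepsilon$ explicitly and read off projectivity from the idempotent splitting $\mathbb{Z}S=\mathbb{Z}S\varepsilon\oplus\mathbb{Z}S(1-\varepsilon)$, then push a projective resolution through $\mathbf{J}^{\ast}$. Your route is more elementary and yields the bonus information of what $\mathbf{J}^{\ast}\mathbb{Z}G$ actually is; the paper's route packages the comparison of Ext groups into a single citation. Note, incidentally, that your isomorphism is implicit in the paper's exact sequence: the image of $\psi$ is exactly $\mathbb{Z}S(1-\varepsilon)$, so $\mathbf{J}^{\ast}\mathbb{Z}G\cong\mathbb{Z}S/\mathbb{Z}S(1-\varepsilon)\cong\mathbb{Z}S\varepsilon$. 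One small point: when you say ``by additivity'' $\mathbf{J}^{\ast}$ sends free modules to projectives, you are using that $\mathbf{J}^{\ast}$ preserves arbitrary (not just finite) direct sums; this is fine since restriction of scalars along a ring map always does, but it is worth saying explicitly.
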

\begin{proof}
We give first the strategy of the proof and then proceed with the technical details. The crux of the proof is to show that $\mathbf{J}^{\ast} \mathbb{Z}G$ is a projective left $\mathbb{Z}S$ module via $\mu$ and then utilize proposition 12.3 of \cite{HilStamm} which gives in this case a natural isomorphism $\text{Ext}^{n}_{\mathbb{Z}G}(\mathbb{Z},\text{Hom}_{\mathbb{Z}S}(\mathbf{J}^{\ast}\mathbb{Z}G,B)) \cong \text{Ext}^{n}_{\mathbb{Z}S}(\mathbb{Z},B)$ for every $n \geq 0$ and $B \in \mathbf{Ab}^{\mathbb{Z}S}$. This implies immediately that $\text{cd}S \leq \text{cd}G$. This, together with the remark after theorem \ref{coh}, imply that $\text{cd}S = \text{cd}G$. Let us show that $\mathbf{J}^{\ast} \mathbb{Z}G$ is projective. From theorem \ref{coh} we see that $\mathbf{J}^{\ast} \mathbb{Z}G$ is a flat $\mathbb{Z}S$ module, so to prove it is projective it is enough to show that it is finitely presented. Indeed, there is an exact sequence
\begin{equation} \label{fp}
 \mathbb{Z}S \overset{\psi} \rightarrow \mathbb{Z}S \overset{\tilde{\mu}} {\rightarrow} \mathbf{J}^{\ast}\mathbb{Z}G \rightarrow 0
\end{equation}
where $\tilde{\mu}$ is the linear extension of $\mu$ and $\psi$ is defined by $\psi(s)=s-s\varepsilon$. It is easy to see that $\psi$ is a left $\mathbb{Z}S$ module homomorphism which maps $\mathbb{Z}S$ onto $\text{Ker}(\tilde{\mu})$. The latter follows easily from the fact that $\text{Ker}(\tilde{\mu})$ is generated as an abelian group from elements of the form $s-s\varepsilon$. 
\end{proof} \newline
We recall from \cite{Howie} that a free Clifford monoid on a set $X$ is the set 
\begin{equation*}
CM_{X}=\{(u,A) \in FG_{X} \times \mathcal{P}(X)|c(u) \subseteq A\},
\end{equation*}
where $FG_{X}$ is the free group on $X$, $\mathcal{P}(X)$ is the set of all subsets of $X$, and by $c(u)$ we denote the content of the word $u$, that is, the set of all letters from $X$ represented in the word $u$. The multiplication on $CM_{X}$ is defined by
\begin{equation*}
(v,B)(u,A)=(vu,B \cup A).
\end{equation*}
\begin{corollary} \label{free Clifford}
If $CM_{X}$ is the free Clifford monoid on a set $X$, then the cohomological dimension of $CM_{X}$ and that of its maximum group image $G=CM_{X}/\sigma$ agree and are equal to one.
\end{corollary}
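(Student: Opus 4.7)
The plan is to apply Proposition \ref{min idempotent} and then invoke the well-known fact that free groups have cohomological dimension one. Two small identifications are required: locating a minimum idempotent of $CM_X$ and describing its maximum group image.

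First, I would observe that the idempotents of $CM_X$ are exactly the pairs $(1,A)$ with $A \in \mathcal{P}(X)$, since $c(1)=\emptyset$ is contained in every $A$ and $(1,A)^2=(1,A)$. Because $(1,A)(1,B)=(1,A\cup B)$, the natural partial order on idempotents satisfies $(1,A)\leq (1,B)$ iff $B\subseteq A$, so $\varepsilon:=(1,X)$ absorbs every other idempotent from the left and is therefore a minimum idempotent of $CM_X$.

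Second, I would identify $G=CM_X/\sigma$ with the free group $FG_X$. The projection $(u,A)\mapsto u$ is a surjective monoid homomorphism $CM_X\to FG_X$, and its kernel coincides with $\sigma$: if $(u,A)\,\sigma\,(v,B)$ via some idempotent $(1,C)$, then $(u,A\cup C)=(v,B\cup C)$ forces $u=v$ in $FG_X$; conversely, if $u=v$ then right-multiplying both elements by $\varepsilon=(1,X)$ gives $(u,X)=(v,X)$, showing $\sigma$-equivalence. Hence $G\cong FG_X$.

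Combining these two observations with Proposition \ref{min idempotent} yields $\text{cd}(CM_X)=\text{cd}(FG_X)$. Assuming $X$ is nonempty (which is implicit in the corollary, since otherwise $CM_X$ is trivial and the common value is $0$), the standard length-one free resolution $0\to\bigoplus_{x\in X}\mathbb{Z}[FG_X]\to\mathbb{Z}[FG_X]\to\mathbb{Z}\to 0$ together with the nontriviality of $FG_X$ gives $\text{cd}(FG_X)=1$. No real obstacle is anticipated: once Proposition \ref{min idempotent} is available, the argument reduces to unpacking the definitions of $CM_X$ and $\sigma$ and citing a classical computation for free groups.
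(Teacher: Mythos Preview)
Your proposal is correct and follows essentially the same route as the paper: locate the minimum idempotent $(1,X)$, invoke Proposition~\ref{min idempotent}, and then identify $G$ as a free group so that $\text{cd}\,G=1$. The only cosmetic difference is that the paper establishes freeness of $G$ via the universal property of $CM_X$ (showing $G$ is free on the classes $\overline{(x,\{x\})}$), whereas you build the isomorphism $CM_X/\sigma\cong FG_X$ directly from the projection $(u,A)\mapsto u$; your added remark on the nonemptiness of $X$ is also a welcome clarification that the paper omits.
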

\begin{proof}
As one can see from above, a free clifford monoid contains the minimal idempotent $(1,X)$, therefore from proposition \ref{min idempotent} we have that $\text{cd}\mathbb{Z}CM_{X}=\text{cd}G$, so it remains to show that $\text{cd}G=1$. This can be achieved if we prove that $G$ is free on some set. In fact it is free on the set of equivalence classes $\{(\overline{x,\{x\}})| x \in X\}$ modulo $\sigma$. This can be proved easily using the universal property of $CM_{X}$ as depicted in the following diagram
\begin{equation*}
\xymatrix{ X  \ar@{}[d] \ar[r]^{\iota}  \ar[dr]_{f} & CM_{X} \ar@{-->}[d]^{\varphi} \ar[r]^{\mu} & CM_{X}/\sigma \ar@{-->}[ld]^{\tilde{\varphi}} \ar@{}[d]\\
{} \ar@{}[r] & H \ar@{}[r] & {}
}
\end{equation*}
where $H$ is any group and $f:X \rightarrow H$ is any map, $\iota(x)=(x,\{x\})$, $\mu$ is the natural epimorphism, $\varphi$ is the monoid morphism which extends uniquely $f$ and $\tilde{\varphi}$ is given by $\tilde{\varphi}((\overline{u,c(u)}))=\varphi((u, c(u)))$.
\end{proof}
\newline
\newline
Corollary shows that free Clifford monoids are among other candidates to prove an analogue of the Stalling Swan theorem for inverse semigroups in terms of the Eilenberg-Mac Lane cohomology. Though it should be mentioned that, similarly to the Lausch cohomology, the dimension zero case for the Eilenberg MacLane cohomology is quite different from that of groups. More specifically, for E-unitary inverse monoids we have this
\begin{corollary}
Let $S$ be an E-unitary inverse monoid. Then $S$ has cohomological dimension zero if and only if $S$ is a semilattice with zero.
\end{corollary}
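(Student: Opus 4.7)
The plan is to prove both directions by reducing cohomological dimension zero to algebraic statements about a splitting of the augmentation sequence, and to leverage Theorem \ref{coh} together with the E-unitary hypothesis to pin down the structure of $S$.

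For the forward implication, I would start from the fact that $\text{cd}\,S = 0$ is equivalent to $\mathbb{Z}$ being a projective $\mathbb{Z}S$-module. By the remark following Theorem \ref{coh}, $\text{cd}\,G \leq \text{cd}\,S = 0$, so $\text{cd}\,G = 0$; since for groups this forces $G$ to be trivial, every element of $S$ is $\sigma$-equivalent to $1$. The E-unitary hypothesis then implies that every such element is an idempotent, so $S = E$ is a semilattice (necessarily with identity, since $S$ is a monoid).

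Next, I would exploit projectivity of $\mathbb{Z}$ directly to extract the zero. The augmentation $\tilde{\mu}:\mathbb{Z}S \to \mathbb{Z}$ splits, giving $z = \sum_{i=1}^{k} n_i s_i \in \mathbb{Z}S$ with $\sum n_i = 1$ and $sz = z$ for every $s \in S$. Using that $S$ is a meet-semilattice, $s \cdot s_i = s \wedge s_i$, pick a $\leq$-minimal element $m$ of the (finite) support of $z$ and compute $mz = \sum n_i (m \wedge s_i)$. Since every $m \wedge s_i \leq m$, the support of $mz$ lies in $\{t : t \leq m\}$; combined with $mz = z$ and minimality of $m$ in $\operatorname{supp}(z)$, this collapses $z$ to a single term at $m$, whence $z = m$ (with coefficient $1$). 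Then $sm = m$ for all $s \in S$ says precisely that $m$ is a zero of $S$. I expect this collapsing argument to be the main obstacle, since it is the step that actually uses the projectivity and not merely the vanishing of $G$.

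For the converse, assume $S$ is a semilattice with zero $0$. Sending $1 \mapsto 0 \in S \subset \mathbb{Z}S$ defines a $\mathbb{Z}S$-linear map $\mathbb{Z} \to \mathbb{Z}S$ (because $s\cdot 0 = 0$ for all $s$) that splits $\tilde{\mu}$; hence $\mathbb{Z}$ is a direct summand of $\mathbb{Z}S$, so $\mathbb{Z}$ is projective and $\text{cd}\,S = 0$. This finishes both directions with the verification that the constructed splitting really is $\mathbb{Z}S$-linear being the only point to check carefully.
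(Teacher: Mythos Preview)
Your argument is correct and follows the same overall line as the paper's proof: from Theorem~\ref{coh} (more precisely, the inequality $\text{cd}\,G\le\text{cd}\,S$ recorded right after it) you deduce that $G$ is trivial, and then the E-unitary hypothesis forces $S=E$.

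The difference lies in how you establish that $E$ has a zero and in the converse. The paper simply cites \cite{Laudal} and \cite{Guba-Pride} for the existence of a zero and declares the converse ``obvious''. You instead give a self-contained argument in both cases: you split the augmentation $\tilde\mu:\mathbb{Z}S\to\mathbb{Z}$, analyse the support of the image of $1$ to locate the zero element, and for the converse you exhibit the splitting $1\mapsto 0\in\mathbb{Z}S$ explicitly. Your approach is more elementary and keeps the corollary independent of the cited literature; the paper's approach is shorter but relies on external results. Both are valid, and your minimal-support trick is essentially the content of the Guba--Pride argument in this special case.
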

\begin{proof}
From theorem \ref{coh} the maximum group image $G$ has to be zero, therefore for every $s\in S$, $(s,1_{S}) \in \sigma$. Since $S$ is E-unitary, it follows that $s$ is an idempotent. The fact that $S$ has a zero follows from \cite{Laudal} and also from \cite{Guba-Pride}. The converse is obvious.
\end{proof}
\newline
\newline
We can get another application of theorem \ref{coh} in the case of abelian inverse monoids. We recall from \cite{Nico} the following problem discussed there for abelian monoids in general. If $S$ is an abelian monoid and $T$ its maximal cancellative homomorphic image, then for any $\mathbb{Z}S$ module $D$ we form the $\mathbb{Z}T$ module $D'=\text{Hom}_{\mathbb{Z}S}(\mathbb{Z}T,D)$. It is well known the existence of a natural homomorphism $H^{n}(T,D') \rightarrow H^{n}(S,D)$. If $D$ is a trivial $\mathbb{Z}S$ module, then $D'$ is just $D$ regarded as a trivial $\mathbb{Z}T$ module. The above homomorphism turns out to be an isomorphism for $D$ trivial and $n=1,2$ but it was unknown what happens for $n \geq 3$. If $S$ is an abelian inverse monoid, then it is obvious that $T$ is just $G$, the maximum group image of $S$. In this case we can utilize the isomorphism (\ref{Z}) to obtain an isomorphism $Ext_{\mathbb{Z}G}^{n}(\mathbb{Z},D') \cong Ext_{\mathbb{Z}S}^{n}(\mathbb{Z}, \mathbf{J}^{\ast}D')$. The following is now immediate.
\begin{corollary}
If $S$ is an abelian inverse monoid and $G$ its maximum group image, then for any trivial $\mathbb{Z}S$ module $D$ there is a natural isomorphism $H^{n}(S,D) \cong H^{n}(G,D)$ for every $n \in \mathbb{N}$.
\end{corollary}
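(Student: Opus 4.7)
The plan is to obtain the isomorphism as a direct substitution into the natural isomorphism of theorem \ref{coh}, with the abelian hypothesis playing only the role of identifying the maximum cancellative image $T$ in Nico's formulation with the maximum group image $G$ of the inverse monoid.

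The first step is to observe that a trivial $\mathbb{Z}S$-module $D$ is nothing but an abelian group, and may equally well be regarded as a trivial $\mathbb{Z}G$-module; call this $\mathbb{Z}G$-module $D_{G}$. Next, I would verify that $\mathbf{J}^{\ast}D_{G}=D$. Unwinding the definition of the change-of-rings functor, the $\mathbb{Z}S$-action on $\mathbf{J}^{\ast}D_{G}$ is given by $s\cdot d=\mu(s)\cdot d=d$, which is precisely the trivial $\mathbb{Z}S$-action on $D$. Hence $\mathbf{J}^{\ast}D_{G}$ and $D$ coincide as $\mathbb{Z}S$-modules.

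Now I would apply theorem \ref{coh} to $M=D_{G}$ to get the chain of natural identifications
\begin{equation*}
H^{n}(G,D)=Ext_{\mathbb{Z}G}^{n}(\mathbb{Z},D_{G})\cong Ext_{\mathbb{Z}S}^{n}(\mathbb{Z},\mathbf{J}^{\ast}D_{G})=Ext_{\mathbb{Z}S}^{n}(\mathbb{Z},D)=H^{n}(S,D),
\end{equation*}
with naturality inherited from that of the isomorphism of theorem \ref{coh}. The discussion preceding the corollary guarantees that for an abelian inverse monoid $S$ the maximum cancellative quotient $T$ coincides with $G$, and that for trivial $D$ the module $D'=\mathrm{Hom}_{\mathbb{Z}S}(\mathbb{Z}G,D)$ is just $D$ itself viewed over $\mathbb{Z}G$, so the above identification is exactly the isomorphism left open for $n\geq 3$ in Nico's paper.

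There is no real obstacle here, since the content of the argument is entirely supplied by theorem \ref{coh}; the only subtlety is the trivial but essential verification that $\mathbf{J}^{\ast}$ sends a trivial $\mathbb{Z}G$-module to a trivial $\mathbb{Z}S$-module, which is immediate from the factorisation of $\mathbf{J}^{\ast}$ through the epimorphism $\mu:S\rightarrow G$.
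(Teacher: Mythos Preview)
Your proposal is correct and follows exactly the route the paper takes: the preceding paragraph already records that for abelian $S$ one has $T=G$ and that $D'=D$ as a trivial $\mathbb{Z}G$-module, after which the corollary is declared ``immediate'' from the isomorphism (\ref{Z}) of theorem~\ref{coh}. Your explicit verification that $\mathbf{J}^{\ast}D_{G}=D$ is the one step the paper leaves to the reader, and your observation that the abelian hypothesis serves only to match Nico's framework (not to establish the isomorphism itself) is spot on.
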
 
The proof of theorem \ref{S-G} will use a characterization of the $FP_{\infty}$ property for modules $M$ in terms of the functors $\text{Ext}(M,\bullet)$ and $\text{Tor}(M,\bullet)$.

\begin{theorem} \label{S-G}
Let $S$ be an inverse monoid and $G$ its maximum group image. If $S$ is of type $FP_{\infty}$, then $S$ contains a minimal idempotent and $G$ is of type $FP_{\infty}$. Conversely, if $S$ contains a minimal idempotent and $G$ is of type $FP_{\infty}$, then $S$ is of type $FP_{\infty}$.
\end{theorem}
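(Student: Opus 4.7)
The plan is to invoke the standard Bieri-type characterization of the $FP_{\infty}$ property: a module $M$ over a ring $R$ is of type $FP_{\infty}$ if and only if $\text{Ext}_{R}^{n}(M,-)$ commutes with arbitrary filtered colimits for every $n \geq 0$, equivalently if and only if $\text{Tor}_{n}^{R}(-,M)$ commutes with arbitrary direct products for every $n \geq 0$. Specialising to the trivial module $M = \mathbb{Z}$ over $\mathbb{Z}S$ and $\mathbb{Z}G$ turns the $FP_{\infty}$ condition on the monoid into a preservation statement that can be manipulated via Theorem \ref{coh} and Proposition \ref{min idempotent}.

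The converse direction is the cleaner one. Assume $S$ has a minimal idempotent $\varepsilon$ and $G$ is of type $FP_{\infty}$. Fix a resolution $P_{\bullet} \to \mathbb{Z} \to 0$ of the trivial $\mathbb{Z}G$-module by finitely generated projective $\mathbb{Z}G$-modules, so each $P_{i}$ is a direct summand of some $(\mathbb{Z}G)^{n_{i}}$. Apply $\mathbf{J}^{\ast}$: by the proof of Theorem \ref{coh} it is exact, so $\mathbf{J}^{\ast}P_{\bullet} \to \mathbb{Z} \to 0$ is a resolution of the trivial $\mathbb{Z}S$-module. Each $\mathbf{J}^{\ast}P_{i}$ is a direct summand of $(\mathbf{J}^{\ast}\mathbb{Z}G)^{n_{i}}$, and Proposition \ref{min idempotent} shows that $\mathbf{J}^{\ast}\mathbb{Z}G$ is finitely presented and flat, hence finitely generated projective over $\mathbb{Z}S$. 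Thus every $\mathbf{J}^{\ast}P_{i}$ is finitely generated projective and $S$ is of type $FP_{\infty}$.

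For the first half of the forward direction, assume $S$ is of type $FP_{\infty}$ and let $\{M_{\alpha}\}$ be any filtered system of left $\mathbb{Z}G$-modules. The functor $\mathbf{J}^{\ast}$ is restriction along the ring map $\tilde{\mu}: \mathbb{Z}S \to \mathbb{Z}G$, computed on underlying abelian groups, and hence commutes with filtered colimits. Combined with Theorem \ref{coh} this gives
\[
\text{Ext}_{\mathbb{Z}G}^{n}(\mathbb{Z}, \varinjlim M_{\alpha}) \cong \text{Ext}_{\mathbb{Z}S}^{n}(\mathbb{Z}, \varinjlim \mathbf{J}^{\ast} M_{\alpha}) \cong \varinjlim \text{Ext}_{\mathbb{Z}S}^{n}(\mathbb{Z}, \mathbf{J}^{\ast} M_{\alpha}) \cong \varinjlim \text{Ext}_{\mathbb{Z}G}^{n}(\mathbb{Z}, M_{\alpha}),
\]
the middle step using the $FP_{\infty}$ hypothesis on $S$. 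Hence $G$ is of type $FP_{\infty}$.

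The main obstacle is the remaining implication: that $S$ of type $FP_{\infty}$ forces the semilattice $E$ to contain a minimum element. My intended strategy is to exploit the Tor side of the characterization applied to a family indexed by $E$. Since $\mathbb{Z}$ is in particular $FP_{1}$, the canonical map
\[
\Bigl(\prod_{e \in E} N_{e}\Bigr) \otimes_{\mathbb{Z}S} \mathbb{Z} \;\longrightarrow\; \prod_{e \in E} \bigl(N_{e} \otimes_{\mathbb{Z}S} \mathbb{Z}\bigr)
\]
must be an isomorphism for every family of right $\mathbb{Z}S$-modules $\{N_{e}\}$, and this should fail for a carefully chosen family (for example, the principal right ideals $e\mathbb{Z}S$, or the quotients $\mathbb{Z}S / \mathbb{Z}S(1-e)$ suggested by the sequence (\ref{fp}) in Proposition \ref{min idempotent}) in the absence of a smallest idempotent. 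Producing an element of the left-hand side whose image is forced, by the chain of compatibilities among the $e$'s, to live in a common $\mathbb{Z}S\varepsilon$-coset is where the combinatorics of $E$ interacts most delicately with the ring-theoretic finiteness, and I expect this to be the hardest step. If necessary, this can be replaced by a more categorical route, appealing to Lemma \ref{strongly filtered} together with Schofield's criterion that identifies the strongly filtered property of $\Im \downarrow \ast_{G}$ with the exactness required to transport $FP_{\infty}$-type resolutions between $S$ and $G$.
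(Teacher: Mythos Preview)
Your argument that $G$ inherits $FP_{\infty}$ from $S$ via the Ext--colimit criterion is exactly the paper's. Your converse is \emph{different} from the paper's and in fact cleaner: the paper runs the Tor--product criterion through a long chain of isomorphisms, invoking the strongly filtered structure of $\Im\downarrow\ast_{G}$ (Lemma~\ref{strongly filtered}) together with Schubert's result that strongly filtered colimits commute with products, whereas you simply pull back a finite-type projective $\mathbb{Z}G$-resolution along $\mathbf{J}^{\ast}$ and use the projectivity of $\mathbf{J}^{\ast}\mathbb{Z}G$ established in the proof of Proposition~\ref{min idempotent}. Both work; yours is more direct and avoids the detour through (\ref{tor}) and (\ref{torG}).

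The genuine gap is the existence of a minimal idempotent. Your primary strategy---detecting the failure of $(-)\otimes_{\mathbb{Z}S}\mathbb{Z}$ to commute with an $E$-indexed product when $E$ has no minimum---is only a sketch, and your fallback is circular: Lemma~\ref{strongly filtered} proves that $\Im\downarrow\ast_{G}$ is strongly filtered \emph{assuming} a minimal idempotent, so it cannot be used to produce one. The paper's route is quite different from either of your suggestions and worth knowing. One regards $\mathbb{Z}E$ as a $(\mathbb{Z}E,\mathbb{Z}S)$-bimodule via the conjugation action $e\cdot s=s^{-1}es$, tensors a finite-type $FP_{1}$ partial resolution of the trivial $\mathbb{Z}S$-module $\mathbb{Z}$ on the left by $\mathbb{Z}E$, and checks the elementary identifications $\mathbb{Z}E\otimes_{\mathbb{Z}S}\mathbb{Z}S\cong\mathbb{Z}E$ and $\mathbb{Z}E\otimes_{\mathbb{Z}S}\mathbb{Z}\cong\mathbb{Z}$. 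This yields an $FP_{1}$ partial resolution of $\mathbb{Z}$ over $\mathbb{Z}E$, so the semilattice $E$ is of type $FP_{1}$. Kobayashi's theorem then says $E$ is unitarily finitely generated, and the argument of Gray--Pride converts this into the existence of a minimum idempotent. Nothing in your proposal approximates this reduction to the semilattice $E$; you should either carry out your product computation explicitly or adopt the conjugation-tensor argument.
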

\begin{proof}
In particular $S$ is of type $FP_{1}$, therefore there is free partial resolution of finite type of the trivial $\mathbb{Z}S$ module $\mathbb{Z}$:
\begin{equation} \label{FP1}
\oplus_{i \in I_{1}} \mathbb{Z}S \rightarrow \oplus_{i \in I_{0}} \mathbb{Z}S \rightarrow \mathbb{Z} \rightarrow 0.
\end{equation}
We can regard $\mathbb{Z}E$ as a right $\mathbb{Z}S$ module via the conjugation of $S$ on $E$: $e \cdot s=s^{-1}es$ for all $s \in S$ and $e \in E$. If we tensor (\ref{FP1}) on the left by $\mathbb{Z}E$ regarded as a left $\mathbb{Z}E$ module and a right $\mathbb{Z}S$ module, we obtain the following exact sequence in $\mathbb{Z}E\text{-}\mathbf{Mod}$
\begin{equation} \label{tensor FP1}
 \oplus_{i \in I_{1}} \mathbb{Z}E \otimes_{\mathbb{Z}S} \mathbb{Z}S \rightarrow  \oplus_{i \in I_{0}} \mathbb{Z}E \otimes_{\mathbb{Z}S} \mathbb{Z}S \rightarrow \mathbb{Z}E \otimes_{\mathbb{Z}S} \mathbb{Z} \rightarrow 0.
\end{equation}
Next we show that $\mathbb{Z}E \otimes_{\mathbb{Z}S} \mathbb{Z}$ is the trivial $\mathbb{Z}E$ module $\mathbb{Z}$. Indeed, any generator $e \otimes_{\mathbb{Z}S}z$ can be reduced as follows: $e \otimes_{\mathbb{Z}S}z=1_{S} \otimes_{\mathbb{Z}S}e\cdot z=1_{S} \otimes_{\mathbb{Z}S} z$, hence $\mathbb{Z}E \otimes_{\mathbb{Z}S} \mathbb{Z} \cong \mathbb{Z}$ as abelian groups. On the other hand, the action of any idempotent $f$ on the generator $1_{S} \otimes_{\mathbb{Z}S} z$ leaves fixed this generator. Similarly one can show that $\mathbb{Z}E \otimes_{\mathbb{Z}S} \mathbb{Z}S \cong \mathbb{Z}E$. Indeed, any generator $e \otimes_{\mathbb{Z}S}s$ of $\mathbb{Z}E \otimes_{\mathbb{Z}S} \mathbb{Z}S$ can be reduced as follows: $e \otimes_{\mathbb{Z}S}s=s^{-1}es \otimes_{\mathbb{Z}S} 1_{S}$ which shows that the elements of $\mathbb{Z}E \otimes_{\mathbb{Z}S} \mathbb{Z}S$ are $\mathbb{Z}$-linear combinations of elements of the form $e \otimes_{\mathbb{Z}S} 1_{S}$. From this it is easy to see why $\mathbb{Z}E \otimes_{\mathbb{Z}S} \mathbb{Z}S \cong \mathbb{Z}E$ as left $\mathbb{Z}E$ modules. Now using the exactness of (\ref{tensor FP1}) and above isomorphisms, we can obtain the following exact sequence
\begin{equation*} 
\oplus_{i \in I_{1}} \mathbb{Z}E \rightarrow \oplus_{i \in I_{0}} \mathbb{Z}E \rightarrow \mathbb{Z} \rightarrow 0,
\end{equation*}
proving that $E$ is of type $FP_{1}$. The main result of \cite{Kobayashi FP1} shows that $E$ is unitarily finitely generated and then in the same way as in the proof of theorem 9 of \cite{G + P} we can show that $E$ contains a minimal idempotent.
\newline
To prove that $G$ is of type $FP_{\infty}$, as shown in \cite{Brown-book} (see also \cite{Brown}), we need to prove that $Ext_{\mathbb{Z}G}^{n}(\mathbb{Z},\bullet)$ commutes with direct limits. Let $\underrightarrow{lim}M_{i}$ be a direct limit of a diagram of modules $M_{i}$ with $i \in I$. Then we have the following natural isomorphisms
\begin{align*}
Ext_{\mathbb{Z}G}^{n}(\mathbb{Z},\underrightarrow{Lim}M_{i}) & \cong Ext_{\mathbb{Z}S}^{n}(\mathbb{Z}, \mathbf{J}^{\ast}\underrightarrow{Lim}M_{i}) && \text{ from (\ref{Z})} \\
& \cong Ext_{\mathbb{Z}S}^{n}(\mathbb{Z}, \underrightarrow{Lim}  \mathbf{J}^{\ast}M_{i}) && \text{ the functor $\mathbf{J}^{\ast}$ has a right adjoint \cite{HilStamm}}\\
& \cong \underrightarrow{Lim} Ext_{\mathbb{Z}S}^{n}(\mathbb{Z}, \mathbf{J}^{\ast} M_{i}) && \text{ $S$ is of type $FP_{\infty}$ }\\
& \cong \underrightarrow{Lim}Ext_{\mathbb{Z}G}^{n}(\mathbb{Z},M_{i}) && \text{ from the naturality of (\ref{Z})},
\end{align*}
which proves that $G$ is of type $FP_{\infty}$ as required.\newline
Before we prove the converse under the given hypothesis, we will make an observation. We denote by $\mathbb{Z}$ the trivial right $\mathbb{Z}S$ module $\mathbb{Z}$ and by $\mathbb{Z}'$ the trivial right $\mathbb{Z}G$ module $\mathbb{Z}$. For every abelian group $C$, the left $\mathbb{Z}S$ modules $\mathbf{J}^{\ast} \text{Hom}_{\mathbb{Z}}(\mathbb{Z}',C)$ and $\text{Hom}_{\mathbb{Z}}(\mathbb{Z},C)$ coincide. Of course, as abelian groups they are equal. To see that they are equal as modules, we recall that the action of $S$ on the elements $f$ of $\mathbf{J}^{\ast}\text{Hom}_{\mathbb{Z}}(\mathbb{Z}',C)$ is defined by posing $s\cdot f(x)=\mu(s) \cdot f(x)=f(x\cdot \mu(s))$ for every $s \in S$ and $x \in \mathbb{Z}'$. But from the definition of $\mathbb{Z}'$, $f(x\cdot \mu(s))=f(x)$, and as a result $s\cdot f=f$ which means that $\mathbf{J}^{\ast} \text{Hom}_{\mathbb{Z}}(\mathbb{Z}',C)$ is trivial as a left $\mathbb{Z}S$ module. Similarly, we can prove that the abelian group $\text{Hom}_{\mathbb{Z}}(\mathbb{Z},C)$ is trivial as a left $\mathbb{Z}S$ module, therefore we have the equality. For any left $\mathbb{Z}S$ module $A$ and every abelian group $C$ we have the following natural isomorphisms.
\begin{align*}
\text{Hom}_{\mathbb{Z}}(\mathbb{Z}' \otimes_{\mathbb{Z}G} \widetilde{\mathbf{J}}A,C) & \cong \text{Hom}_{\mathbb{Z}G}(\widetilde{\mathbf{J}}A, \text{Hom}_{\mathbb{Z}}(\mathbb{Z}',C)) && \text{ from the adjoint associativity } \\
& \cong \text{Hom}_{\mathbb{Z}S}(A, \mathbf{J}^{\ast} \text{Hom}_{\mathbb{Z}}(\mathbb{Z}',C)) && \text{ from theorem \ref{coh}} \\
& = \text{Hom}_{\mathbb{Z}S}(A, \text{Hom}_{\mathbb{Z}}(\mathbb{Z},C)) && \text{ from our observation } \\
& \cong \text{Hom}_{\mathbb{Z}}(\mathbb{Z} \otimes_{\mathbb{Z}S}A, C) && \text{ from the adjoint associativity.}
\end{align*} 
It follows that we have the natural isomorphism in $\mathbf{Ab}$
\begin{equation*}
\text{Nat}(\text{Hom}_{\mathbb{Z}}(\mathbb{Z}' \otimes_{\mathbb{Z}G} \widetilde{\mathbf{J}}A,\bullet), \mathbb{Z}\otimes_{\mathbb{Z}} \bullet) \cong \text{Nat}(\text{Hom}_{\mathbb{Z}}(\mathbb{Z} \otimes_{\mathbb{Z}S}A, \bullet), \mathbb{Z}\otimes_{\mathbb{Z}} \bullet).
\end{equation*}
Yoneda lemma now implies the existence of a natural isomorphism
\begin{equation*}
\mathbb{Z}' \otimes_{\mathbb{Z}G} \widetilde{\mathbf{J}}A \cong \mathbb{Z} \otimes_{\mathbb{Z}S}A.
\end{equation*}
Since $\widetilde{\mathbf{J}}$ preserves projective resolutions (theorem 12.1, p. 162 of \cite{HilStamm}), the naturality of the above isomorphism implies that for every $n \geq 0$ there are natural isomorphisms
\begin{equation} \label{tor}
\text{Tor}^{\mathbb{Z}G}_{n}(\mathbb{Z}', \widetilde{\mathbf{J}}A) \cong \text{Tor}^{\mathbb{Z}S}_{n}(\mathbb{Z},A).
\end{equation}
Similarly one can prove that for any $K \in \mathbf{Ab}^G$ there is a natural isomorphism
\begin{equation} \label{torG}
\text{Tor}_{n}^{\mathbb{Z}G}(\mathbb{Z}', \mathcal{I}_{G}K) \cong \text{Tor}_{n}^{G}(\mathcal{I}_{G}^{-1}\mathbb{Z}',K).
\end{equation}
To prove that the trivial $\mathbb{Z}S$ module $\mathbb{Z}$ is of type $FP_{\infty}$ we must show that $\text{Tor}^{\mathbb{Z}S}_{n}(\mathbb{Z}, \bullet)$ commutes with direct products. Let $A_{i}$ for $i \in I$ be a family of left $\mathbb{Z}S$ modules. The following natural isomorphisms hold true.
\begin{align*}
\text{Tor}^{\mathbb{Z}S}_{n}(\mathbb{Z}, \underset{i \in I}{\Pi}A_{i}) & \cong \text{Tor}^{\mathbb{Z}G}_{n} (\mathbb{Z}', \widetilde{\mathbf{J}} \underset{i \in I}{\Pi}A_{i}) && \text{ from (\ref{tor})} \\
& \cong \text{Tor}^{\mathbb{Z}G}_{n} (\mathbb{Z}',\mathcal{I}_{G} \widetilde{J} \mathcal{I}_{S}^{-1}(\underset{i \in I}{\Pi}A_{i})) && \text{ from the definition of $\widetilde{\mathbf{J}}$} \\
& \cong \text{Tor}^{\mathbb{Z}G}_{n} (\mathbb{Z}', \mathcal{I}_{G} \widetilde{J} (\underset{i \in I}{\Pi} \mathcal{I}_{S}^{-1}A_{i})  \\
& \cong \text{Tor}^{\mathbb{Z}G}_{n} (\mathbb{Z}', \mathcal{I}_{G} \underrightarrow{Lim}((\underset{i \in I}{\Pi}\mathcal{I}_{S}^{-1}A_{i})P)) && \text{ from theorem \ref{coh}} \\
& \cong \text{Tor}^{\mathbb{Z}G}_{n} (\mathbb{Z}',\mathcal{I}_{G} \underrightarrow{Lim}(\underset{i \in I}{\Pi}(\mathcal{I}_{S}^{-1}A_{i}P))) && \text{ $(\underset{i \in I}{\Pi}\mathcal{I}_{S}^{-1}A_{i})P \cong \underset{i \in I}{\Pi}(\mathcal{I}_{S}^{-1}A_{i}P)$} \\
& \cong \text{Tor}^{G}_{n} (\mathcal{I}_{G}^{-1}\mathbb{Z}', \underrightarrow{Lim}(\underset{i \in I}{\Pi}(\mathcal{I}_{S}^{-1}A_{i}P))) && \text{ from (\ref{torG})} \\
& \cong \text{Tor}^{G}_{n} (\mathcal{I}_{G}^{-1}\mathbb{Z}', \underset{i \in I}{\Pi} \underrightarrow{Lim}\mathcal{I}_{S}^{-1}A_{i}P) && \text{ lemma \ref{strongly filtered} and proposition 9.5.3 of \cite{Sch}} \\
& \cong \text{Tor}^{\mathbb{Z}G}_{n} (\mathbb{Z}', \mathcal{I}_{G} \underset{i \in I}{\Pi} \underrightarrow{Lim}\mathcal{I}_{S}^{-1}A_{i}P) \\
& \cong \text{Tor}^{\mathbb{Z}G}_{n} (\mathbb{Z}',  \underset{i \in I}{\Pi} \mathcal{I}_{G} \underrightarrow{Lim}\mathcal{I}_{S}^{-1}A_{i}P) && \text{ $\mathcal{I}_{G}$ is a right adjoint} \\
& \cong \underset{i \in I}{\Pi} \text{Tor}^{\mathbb{Z}G}_{n} (\mathbb{Z}', \mathcal{I}_{G}\underrightarrow{Lim}\mathcal{I}_{S}^{-1}A_{i}P) && \text{ $G$ is of type $FP_{\infty}$} \\
& \cong \underset{i \in I}{\Pi} \text{Tor}^{G}_{n} (\mathcal{I}_{G}^{-1}\mathbb{Z}', \underrightarrow{Lim}\mathcal{I}_{S}^{-1}A_{i}P) \\
& \cong \underset{i \in I}{\Pi} \text{Tor}^{G}_{n} (\mathcal{I}_{G}^{-1}\mathbb{Z}', \widetilde{J}\mathcal{I}_{S}^{-1}A_{i}) \\
& \cong \underset{i \in I}{\Pi} \text{Tor}^{\mathbb{Z}G}_{n} (\mathbb{Z}', \mathcal{I}_{G} \widetilde{J}\mathcal{I}_{S}^{-1}A_{i}) \\
& \cong \underset{i \in I}{\Pi} \text{Tor}^{\mathbb{Z}G}_{n} (\mathbb{Z}', \widetilde{\mathbf{J}} A_{i}) && \text{ from the definition of $\widetilde{\mathbf{J}}$} \\
& \cong \underset{i \in I}{\Pi} \text{Tor}^{\mathbb{Z}S}_{n}(\mathbb{Z}, A_{i}) && \text{ from (\ref{tor}).}
\end{align*}
Now comparing the left with the right hand side in the above sequence of isomorphisms, we get the result.
\end{proof}
\newline
\newline
We use the above result to show that property $FP_{\infty}$ behaves nicely with respect to inverse subsemigroups of finite index in the following sense.
\begin{definition}
Let $H$ be a full inverse subsemigroup of an inverse monoid $S$. We say that $H$ is of finite index in $S$ if the maximum group image of $H$ has finite index in the maximum group image of $S$.
\end{definition}
This definition makes sense since the maximum group image of $H$ can be regarded as a subgroup of the maximum group image of $S$ as one can easily check.
\begin{proposition}
Let $S$ be an inverse monoid and $H$ be an inverse subsemigroup of $S$ of finite index. Then, $S$ is of type $FP_{\infty}$ if and only if $H$ is of the same type.
\end{proposition}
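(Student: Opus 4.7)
The plan is to reduce the statement to three facts: Theorem~\ref{S-G} applied to both $S$ and $H$, the observation that being a \emph{full} inverse subsemigroup forces $S$ and $H$ to share the same semilattice of idempotents, and the classical transfer result for the $FP_{\infty}$ property under finite-index inclusions of groups.

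First I would unpack the hypothesis. Denote by $G_{S}$ and $G_{H}$ the maximum group images of $S$ and $H$ respectively; by the remark following the definition, $G_{H}$ embeds as a subgroup of $G_{S}$ of finite index. Because $H$ is full in $S$, it contains every idempotent of $S$, so the semilattice of idempotents of $H$ coincides with that of $S$. In particular, $S$ contains a minimal idempotent if and only if $H$ does.

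Next I would invoke Theorem~\ref{S-G} twice: $S$ is of type $FP_{\infty}$ iff $S$ contains a minimal idempotent and $G_{S}$ is of type $FP_{\infty}$, and similarly $H$ is of type $FP_{\infty}$ iff $H$ (equivalently $S$) contains a minimal idempotent and $G_{H}$ is of type $FP_{\infty}$. Thus everything collapses to showing the equivalence
\begin{equation*}
G_{S} \text{ is of type } FP_{\infty} \;\Longleftrightarrow\; G_{H} \text{ is of type } FP_{\infty},
\end{equation*}
under the assumption that $G_{H}$ has finite index in $G_{S}$.

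This last equivalence is a standard result in the cohomology of groups: if a subgroup has finite index, then the ambient group and the subgroup share the property $FP_{\infty}$ (the proof uses that $\mathbb{Z}G_{S}$ is a finitely generated free $\mathbb{Z}G_{H}$-module via a transversal of $G_{H}$ in $G_{S}$, so any $FP_{\infty}$-resolution over one ring restricts/induces up to one over the other). I would cite this result from Brown's book on the cohomology of groups and combine it with the two preceding paragraphs to conclude. The only step that requires any care is verifying that ``full'' really does give equality of idempotent semilattices and that the induced map $G_{H} \to G_{S}$ is injective, both of which are elementary once the congruence $\sigma$ is restricted; there is no genuine obstacle beyond this bookkeeping.
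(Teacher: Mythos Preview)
Your proposal is correct and follows essentially the same approach as the paper: reduce to the group case via Theorem~\ref{S-G} applied to both $S$ and $H$, use fullness to transfer the minimal-idempotent condition between $S$ and $H$, and then invoke the classical finite-index transfer of $FP_{\infty}$ for groups (Proposition~5.1 in Brown's book). The paper's proof is organized as two separate implications rather than a single biconditional reduction, but the content is identical.
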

\begin{proof}
If $S$ if of type $FP_{\infty}$ then it contains a minimal idempotent $e$ and its maximum group image $\mathcal{S}$ is of the same type. Since $H$ is a full inverse subsemigroup of $S$, it will contain $e$. On the other hand, as we mentioned before, $\mathcal{H} \leq \mathcal{S}$ where $\mathcal{H}$ is the maximum group image of $H$, and from the condition $[\mathcal{S}:\mathcal{H}] < \infty$. Proposition 5.1 of \cite{Brown-book} now implies that $\mathcal{H}$ is of type $FP_{\infty}$ and therefore $H$ is of that type. Conversely, if $H$ is of type $FP_{\infty}$, then $H$ and therefore $S$ contains a minimal idempotent. On the other hand, $\mathcal{H}$ is of type $FP_{\infty}$, hence $\mathcal{S}$ is of the same type as $[\mathcal{S}:\mathcal{H}] < \infty$. The result of theorem \ref{S-G} finishes the proof.
\end{proof}

\end{document}